\def\ps@pprintTitle{%
  \let\@oddhead\@empty
  \let\@evenhead\@empty
  \def\@oddfoot{\reset@font\hfil\thepage\hfil}
  \let\@evenfoot\@oddfoot
}
\newcommand\junk[1]{}
\newtheorem{theorem}{Theorem}
\newtheorem{corollary}[theorem]{Corollary}
\newtheorem{definition}[theorem]{Definition}
\newtheorem{remark}[theorem]{Remark}
\newtheorem{lemma}[theorem]{Lemma}
\def\bd{\mathbf{d}}
\def\ds{\mathbf{s}}
\def\J{\mathcal{J}}
\def\cA{\mathcal{A}}
\def\cB{\mathcal{B}}
\begin{document}

\begin{frontmatter}
\title{On Realizations of a Joint Degree Matrix \tnoteref{t1}}
\author[sc,scimi]{\'{E}va Czabarka}
\author[scimi]{Aaron Dutle
}
\author[renyi]{P\'eter L. Erd\H os\fnref{elp}}
\author[renyi]{Istv\'an Mikl\'os\fnref{mik}}
\address[sc]{Department of Mathematics, University of South Carolina, \\1523 Greene St., Columbia, SC, 29208, USA \\ {\tt email:} czabarka@math.sc.edu}
\address[scimi]{Interdisciplinary Mathematics Institute, University of South Carolina, \\1523 Greene St., Columbia, SC, 29208, USA \\ {\tt email:}  dutle@mailbox.sc edu}
    
\address[renyi]{Alfr\'ed R{\'e}nyi Institute, Re\'altanoda u 13-15 Budapest, 1053 Hungary\\
        {\tt email:} $<$erdos.peter,miklos.istvan$>$@renyi.mta.hu}

\fntext[elp]{PLE   was supported in part by the Hungarian NSF, under contract NK 78439.}
\fntext[mik]{IM was supported in part by a Bolyai postdoctoral stipend and by the Hungarian NSF, under contract F61730.}

\tnotetext[t1]{The authors acknowledge financial support from grant \#FA9550-12-1-0405 from the U.S. Air Force Office of Scientific Research (AFOSR) and the Defense Advanced Research Projects Agency (DARPA).}
\begin{abstract}
The joint degree matrix of a graph gives the number of edges between vertices of degree $i$ and 
degree $j$ for every pair $(i,j)$. One can perform restricted swap operations to transform 
a graph into another with the same joint degree matrix. We prove that the 
space of all realizations of a given joint degree matrix over a \mbox{fi}xed vertex set is connected via 
these restricted swap operations. This was claimed before, but there is an error in the 
previous proof, which we illustrate by example.  We also give a simplified proof of the necessary 
and sufficient conditions for 
a matrix to be a joint degree matrix. Finally, we address some of the issues concerning the 
mixing time of the corresponding MCMC method to sample uniformly from these realizations.     
\end{abstract}
\begin{keyword} degree sequence\sep joint degree matrix\sep  Havel-Hakimi algorithm \sep restricted swap\sep Erd\H{o}s-Gallai theorem.
\MSC[2010] 05C82\sep 90B10\sep 90C40 \end{keyword}
\end{frontmatter}

\section{Introduction}
\noindent In  recent years there has been a large (and growing) interest in real-life social 
and biological networks. One important distinction between these two network types lies in their
overall structure: the first type typically have a few very high degree vertices and many low 
degree vertices with high {\em assortativity} (where a vertex is likely to be adjacent to vertices of 
similar degree), while the second kind is generally {\em disassortative} (in which low degree 
vertices tend to attach to those of high degree). It is well known, the {\em degree sequence}  
alone cannot capture these differences. There are several approaches to address this problem. 
See the paper of Stanton and Pinar (\cite{stanton}) for a detailed description of the current 
state-of-the-art.

\medskip\noindent
In this paper, we address the  {\em joint degree distribution} (or {\em JDD}) model. This model 
is more restrictive than the degree distribution, but it provides a way to enhance
results based on degree distribution.  In essence, the degree distribution of a graph can be considered  as the probability that a vertex selected uniformly at random will be of degree $k$. Analogously,  the joint degree distribution describes the probability that a randomly selected edge 
of the graph connects vertices of degree $k$ and $\ell$.

\medskip\noindent
Amanatidis, Green and Mihail \cite{agm08}  and Stanton and Pinar  \cite{stanton} introduced the {\em joint degree matrix} (or JDM for short) model which is a version of JDD. In essence, the JDD gives (for each $i$ and $j$) the {\em probability} that an edge of the graph 
connects a vertex of degree $i$ to a vertex of degree $j$, while JDM tells us  the exact {\em number} of edges 
between vertices of degrees $i$ and $j$. We will give precise definitions in Section \ref{sec:def}.

\medskip\noindent
In a still unpublished paper  \cite{agm08},  an Erd\H{o}s-Gallai type  theorem was presented 
for joint degree matrices. The lecture \cite{schmitt} sketched its original proof. Stanton and Pinar
\cite{stanton} gave a new, constructive proof for this theorem. In Section \ref{sec:EG}, we present a simpler proof which gives a more general construction algorithm. 

\medskip\noindent
Also in \cite{stanton}, Stanton and Pinar proposed a restricted version of the classical {\em swap operation} (in their words: {\em rewiring}) to transform one realization of a JDM into another one. 
They describe this operation in terms of a generalized configuration model (for the original model 
see \cite{bollobas}), in which a swap is essentially a manipulation of perfect matchings in a bipartite graph. Indeed, if one also considers realizations that are {\em multigraphs} (i.e., graphs allowing loops and multiple edges), their generalized configuration model describes all possible realizations. 
Using a theorem of Ryser (\cite{ryser}) on this generalized configuration model, 
Stanton and Pinar proved that the space of all multigraph realizations is connected. 
They address the connectivity of the space of all (simple) graph realizations of a JDM 
(those without multiple edges or loops), and claim to prove that restricted swap operations make the space of these realizations connected. We show in Section \ref{sec:irred} that their proof is flawed, and present a correct proof of this result in Section \ref{sec:homo}. 

\medskip\noindent
Stanton and Pinar also concluded \cite{stanton} that the corresponding MCMC algorithms that sample multigraph realizations and simple realizations of a JDM are both fast mixing. 
They claimed to give a proof for the first statement, and supported the second statement 
with experimental results. We address both of these claims in Section  \ref{sec:MCMC}.

\medskip\noindent
Finally, in Section~\ref{sec:further} we discuss some open questions.

\section{Definitions}\label{sec:def}
\noindent For the remainder of the paper, unless otherwise noted, all graphs (and by extension all realizations of a JDM) are simple graphs without isolated vertices, and the vertices are labeled. 
Let $G=(V,E)$ be an $n$-vertex graph with {\em degree sequence} $\bd(G)=(d(v_1),\ldots, d(v_n))$. We denote the maximum degree by $\Delta$, and for $1\le i\le\Delta$, 
the set of all vertices of degree $i$ is $V_i$.
The {\em degree spectrum} $\ds_G(v)$ is a vector with $\Delta$ components,
where $\ds_G(v)_i$ gives the number of vertices of degree $i$ adjacent to $v$ 
in the graph $G$. While in {\em graphical realizations} of a degree sequence $\bd$ 
the degree of any particular vertex $v$ is prescribed, its degree spectrum may vary.

\begin{definition}\label{def}
The {\em joint degree matrix} $\J(G)=[\J_{ij}]$ of the graph $G$ is a $\Delta\times\Delta$ matrix where
$\J_{ij}=|\{xy\in E(G):\,x\in V_i,y\in V_j\}|$.
If, for a $k \times k$ matrix $M$ there exists a graph $G$ such that 
$\J(G)=M$, then $M$ is called a {\em graphical JDM}.
\end{definition}
\begin{remark}\label{th:size}
The degree sequence of the graph is determined by its JDM:
\begin{equation}\label{eq:size}
\qquad |V_i| = \frac{1}{i} \left (\J_{ii} + \sum_{\ell=1}^{\Delta} \J_{i \ell} \right ). \qquad \qed
\end{equation}
\end{remark}
\noindent Let $G=(V,E)$ be a  graph and $a,b,c,d$ be distinct vertices where $ac, bd \in E$ while $bc, ad \not \in E$. If $G$ is bipartite, we also require that $a,b$ are in the same class of the bipartition. 
Then $G'=(V,E')$ with 
\begin{equation}\label{eq:swap}
E'= \left(E \setminus \{ac, bd\}\right) \cup \{bc, ad\}
\end{equation}
is another realization of the same degree sequence (and if $G$ is bipartite then $G'$ remains 
bipartite with the same bipartition). The operation in (\ref{eq:swap}) is called a {\em swap}, 
and we denote it by $ac, bd \Rightarrow bc, ad$.  Swaps are used in the 
Havel-Hakimi algorithm (\cite{havel} and \cite{hakimi}).  Petersen \cite{pet} was the 
first to prove that any realization of a degree sequence can be transformed into any 
other realization using only swaps. The corresponding result for bipartite graphs 
was proved by Ryser \cite{ryser}.  

\medskip\noindent An arbitrarily chosen swap operation on $G$ may alter the JDM, so we introduce the {\em restricted swap operation} (or for brevity {\em RSO}), which preserves the JDM.
\begin{definition}
A swap operation is a {\em RSO} if it is a swap operation of the form $ac, bd \Rightarrow bc, ad$, with the additional restriction that there is an $i$ such that $a,b \in V_i$.
\end{definition}
\noindent
It is clear that RSOs indeed keep the JDM unchanged. Even more, an RSO changes only the degree spectrum of vertices $a$ and $b$, a fact that we use repeatedly.   
When we refer to swaps on graphs and bipartite graphs that are not necessarily RSOs, we use the terms {\em ordinary swaps} 
and {\em bipartite swaps}.

\section{The space of all graphical realizations---the challenges}\label{sec:irred}

\noindent Stanton and Pinar \cite{stanton} propose an inductive proof to show that the restricted swap 
operations make the space of all realizations of a JDM connected. 
They take two realizations, $G$ and $H$, of the same 
JDM, choose a vertex $v$, and using RSOs, they transform $G$ and $H$ into $G'$ and $H'$ 
with the property that 
the neighborhoods of $v$ in $G'$ and $H'$ are the same set of vertices. They 
state that after removing $v$ from $G'$ and $H'$ the JDM of the resulting graphs still agree, i.e.
$\J(G'-v)=\J(H'-v)$. Unfortunately, as the following example show,
this is not the case, not even if we
require in addition that the vertices in the neighborhood of 
$V$ have the same degree spectra in $G$ and $H$. 

\medskip\noindent
Let $G$ be a six-cycle with vertices labeled cyclically by integers $\{1,\ldots,6\}$ Let $H$ be the disjoint union of two three-cycles labeled cyclically by $\{1,2,3\}$ and $\{4,5,6\}$ respectively. 
$G$ and $H$ are $2$-regular graphs on $6$ vertices,
both have the same JDM, $\J = \left[ \begin{smallmatrix} 0&0\\0&6 \end{smallmatrix}\right]$, and the degree spectrum of any vertex $v$ is
is $s_G(v)=s_H(v)=(0,2)$.   If we consider vertex $2$, we note that it's neighbor set is $\{1,3\}$ in each graph. On the other hand, the truncated realizations given by deleting vertex $2$ from the graphs each have a different JDM. The truncated JDM for $G$ is $\left[\begin{smallmatrix} 0& 2\\ 2& 2 \end{smallmatrix}\right]$, while for $H$ the truncated JDM is $\left[\begin{smallmatrix} 1& 0\\ 0& 3 \end{smallmatrix}\right]$.

\begin{figure}[h!]
\begin{center}
\begin{tikzpicture}[line cap=round,line join=round,>=triangle 45,x=1.0cm,y=1.0cm]
\clip(-4.25,-0.5) rectangle (4.4,6.35);
\draw [line width=1.2pt] (-3.96,5.12)-- (-2.9,5.88);
\draw [line width=1.2pt] (-1.86,5.16)-- (-2.9,5.88);
\draw [line width=1.2pt] (-1.84,4.04)-- (-1.86,5.16);
\draw [line width=1.2pt] (-2.86,3.34)-- (-1.84,4.04);
\draw [line width=1.2pt] (-2.86,3.34)-- (-3.96,4.04);
\draw [line width=1.2pt] (-3.96,4.04)-- (-3.96,5.12);
\draw [line width=1.2pt] (-3.38,1.08)-- (-3.4,2.2);
\draw [line width=1.2pt] (-3.4,2.2)-- (-2.38,1.72);
\draw [line width=1.2pt] (-2.38,1.72)-- (-3.38,1.08);
\draw [line width=1.2pt] (-2.24,0.98)-- (-3.26,0.38);
\draw [line width=1.2pt] (-3.26,0.38)-- (-2.26,-0.24);
\draw [line width=1.2pt] (-2.24,0.98)-- (-2.26,-0.24);
\draw [->] (-0.62,4.64) -- (0.78,4.62);
\draw [->] (-0.7,1.04) -- (0.72,1.04);
\draw [line width=1.2pt] (2.18,5.24)-- (2.2,4.1);
\draw [line width=1.2pt] (2.2,4.1)-- (3.2,3.48);
\draw [line width=1.2pt] (3.2,3.48)-- (4.14,4.14);
\draw [line width=1.2pt] (4.14,4.14)-- (4.12,5.24);
\draw [line width=1.2pt] (2.72,1.22)-- (3.66,1.8);
\draw [line width=1.2pt] (2.8,0.5)-- (3.76,1.04);
\draw [line width=1.2pt] (3.76,1.04)-- (3.76,-0.14);
\draw [line width=1.2pt] (3.76,-0.14)-- (2.8,0.5);
\begin{scriptsize}
\fill [color=black] (-3.96,5.12) circle (2.5pt);
\draw[color=black] (-3.8,5.46) node {$1$};
\fill [color=black] (-2.9,5.88) circle (2.5pt);
\draw[color=black] (-2.76,6.22) node {$2$};
\fill [color=black] (-1.86,5.16) circle (2.5pt);
\draw[color=black] (-1.7,5.5) node {$3$};
\fill [color=black] (-1.84,4.04) circle (2.5pt);
\draw[color=black] (-1.68,4.38) node {$4$};
\fill [color=black] (-2.86,3.34) circle (2.5pt);
\draw[color=black] (-2.72,3.68) node {$5$};
\fill [color=black] (-3.96,4.04) circle (2.5pt);
\draw[color=black] (-3.82,4.38) node {$6$};
\fill [color=black] (-3.4,2.2) circle (2.5pt);
\draw[color=black] (-3.24,2.54) node {$2$};
\fill [color=black] (-3.38,1.08) circle (2.5pt);
\draw[color=black] (-3.22,1.42) node {$1$};
\fill [color=black] (-2.38,1.72) circle (2.5pt);
\draw[color=black] (-2.28,2.06) node {$3$};
\fill [color=black] (-2.24,0.98) circle (2.5pt);
\draw[color=black] (-2.12,1.32) node {$5$};
\fill [color=black] (-3.26,0.38) circle (2.5pt);
\draw[color=black] (-3.1,0.72) node {$4$};
\fill [color=black] (-2.26,-0.24) circle (2.5pt);
\draw[color=black] (-2.12,0.1) node {$6$};
\fill [color=black] (2.18,5.24) circle (2.5pt);
\draw[color=black] (2.34,5.58) node {$1$};
\fill [color=black] (4.12,5.24) circle (2.5pt);
\draw[color=black] (4.28,5.58) node {$3$};
\fill [color=black] (2.2,4.1) circle (2.5pt);
\draw[color=black] (2.34,4.44) node {$6$};
\fill [color=black] (4.14,4.14) circle (2.5pt);
\draw[color=black] (4.3,4.48) node {$4$};
\fill [color=black] (3.2,3.48) circle (2.5pt);
\draw[color=black] (3.36,3.82) node {$5$};
\fill [color=black] (2.72,1.22) circle (2.5pt);
\draw[color=black] (2.88,1.56) node {$1$};
\fill [color=black] (3.66,1.8) circle (2.5pt);
\draw[color=black] (3.84,2.14) node {$3$};
\fill [color=black] (2.8,0.5) circle (2.5pt);
\draw[color=black] (2.94,0.84) node {$4$};
\fill [color=black] (3.76,1.04) circle (2.5pt);
\draw[color=black] (4.06,1.38) node {$5$};
\fill [color=black] (3.76,-0.14) circle (2.5pt);
\draw[color=black] (4.04,0.2) node {$6$};
\end{scriptsize}
\end{tikzpicture}
\end{center}
\caption{An example of two realizations of a JDM, where vertex 2 has the same neighbor set in both realizations, but whose truncated realizations do not have the same JDM.}
\end{figure}
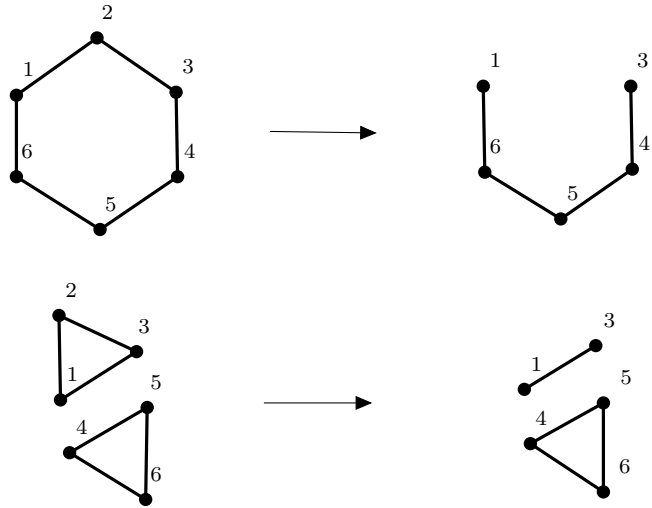

\noindent
We also note that this is not an isolated example. Many pairs of graphs can be found that demonstrate the same problem. 

\section{The space of all realizations is connected under RSOs}\label{sec:homo} 

\medskip\noindent
While Stanton and Pinar's proof is flawed, the statement of their theorem is still true, as we show here. That is, we show that  any realization of a JDM can be transformed via restricted swap operations into any other realization.  First we introduce some definitions and notation.

\medskip\noindent 
Let $\J$ be a graphical JDM.
We fix the vertex set $V$ and its partition $V_1\cup\ldots\cup V_{\Delta}$ appropriately. A {\em realization} $G$ is a graph on vertex set $V$ where the set of vertices of degree $i$ is $V_i$.

\medskip\noindent
For each $j$, set $A_j(j):=\frac{2 \J(j,j)}{|V_j|}$, and for $i\ne j$, $A_j(i):=\frac{\J(i,j)}{|V_j|}$.  Simply put, for any realization $G$ of $\J$ and for all pairs $i,j$ the quantity $A_j(i)$ gives
the average of $\ds_G(v)_i$ over all vertices $v\in V_j$.

\medskip\noindent
The set of degree $j$ vertices $V_j$ is {\em balanced} in $G$, if for each $V_i$ 
the edges connecting $V_j$ to $V_i$ are as uniformly distributed on $V_j$ as possible. In other words, for all $v\in V_j,$ and for all $i,$ we have that
$\ds_G(v)_i\in\{\lfloor A_j(i)\rfloor, \lceil A_j(i) \rceil \}$. A realization $G$ 
is {\em balanced} if $V_i$ is balanced  in $G$ for all $i$.

\medskip\noindent
We will show now that any realization of a given JDM  can be 
transformed into a balanced one via restricted swap operations. 
To this end, for a realization $G,$ a vertex $v,$ and index $i,$ we define 
$$c_G(v,i):= \lfloor | A_{d(v)}(i) - \ds_G(v)_i | \rfloor.$$ 
Clearly, $c_G(v,i)\ge 0$, and $c_G(v,i)=0$ for a $v\in V_j$ precisely
when $\ds_G(v)_i\in\{\lfloor A_j(i) \rfloor,\lceil A_j(i) \rceil\}$.
Our algorithm will be governed by the function
\begin{equation}\label{eq:distance}
C_G(j)=\sum _{v\in V_j} \sum _{i=1}^{k} c_G(v,i).
\end{equation}
When $C_G(j)=0$,  i.e. when $c_G(v,i)=0$ for all $v\in V_j$, then $V_j$ is balanced.
\begin{lemma}\label{th:homo}
If $C_G(j)\ne 0$,  then there are vertices $u, v \in V_j$ and an RSO $vw, uz \Rightarrow vz, uw$ transforming $G$ into $G'$ such that $C_{G'}(j)<C_G(j)$ and for all $\ell\ne j$ $C_{G'}(\ell)=C_{G}(\ell)$.
\end{lemma}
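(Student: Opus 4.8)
The plan is to find an index $i$ in which $V_j$ is unbalanced, pick a vertex $v\in V_j$ with too many neighbors in $V_i$ and a vertex $u\in V_j$ with too few, and move one $V_i$-edge from $v$ to $u$ by a single RSO; the unavoidable side effect of that RSO will be charged to a second coordinate $q$ in which $u$ is ahead of $v$. The first step I would take is a convenient reformulation of $c_G$: for fixed $j$ and any index $\ell$, and any realization $G$ and $x\in V_j$, one has $c_G(x,\ell)=g_\ell(\ds_G(x)_\ell)$ with $g_\ell(t):=\max\{0,\ t-\lceil A_j(\ell)\rceil,\ \lfloor A_j(\ell)\rfloor-t\}$, a convex piecewise-linear function of the integer $t$ that vanishes exactly on $\{\lfloor A_j(\ell)\rfloor,\lceil A_j(\ell)\rceil\}$ and has slope $\pm1$ elsewhere. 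The payoff of this reformulation is: if an RSO changes $\ds(v)_\ell$ and $\ds(u)_\ell$ by $-1$ and $+1$ (with $u,v\in V_j$), it changes $\sum_{x\in V_j}c_G(x,\ell)$ by $g_\ell(\ds_G(v)_\ell-1)+g_\ell(\ds_G(u)_\ell+1)-g_\ell(\ds_G(v)_\ell)-g_\ell(\ds_G(u)_\ell)$, and by convexity this is $\le0$ as soon as the two values are not forced to cross (they keep the same sum and become no more spread out), and $\le-1$ if moreover $\ds_G(v)_\ell>\lceil A_j(\ell)\rceil$ or $\ds_G(u)_\ell<\lfloor A_j(\ell)\rfloor$.

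Next I would choose $i,u,v$. Since $C_G(j)\ne0$ there is an index $i$ with $\sum_{x\in V_j}c_G(x,i)>0$; let $v$ maximize and $u$ minimize $\ds_G(\cdot)_i$ over $V_j$. Because the average of $\ds_G(x)_i$ over $V_j$ equals $A_j(i)$, we get $\ds_G(u)_i\le\lfloor A_j(i)\rfloor\le\lceil A_j(i)\rceil\le\ds_G(v)_i$. If $\ds_G(v)_i\le\ds_G(u)_i+1$, then every $\ds_G(x)_i$ would lie in $\{\lfloor A_j(i)\rfloor,\lceil A_j(i)\rceil\}$ and all $c_G(x,i)$ would be $0$, a contradiction; the same argument shows we cannot have both $\ds_G(v)_i=\lceil A_j(i)\rceil$ and $\ds_G(u)_i=\lfloor A_j(i)\rfloor$. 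Hence $u\ne v$, $\ds_G(v)_i\ge\ds_G(u)_i+2$, and $\ds_G(v)_i>\lceil A_j(i)\rceil$ or $\ds_G(u)_i<\lfloor A_j(i)\rfloor$.

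Then I would choose the compensating coordinate and build the RSO. Since $\sum_\ell\ds_G(v)_\ell=\sum_\ell\ds_G(u)_\ell=j$ and $\ds_G(v)_i-\ds_G(u)_i\ge2$, the set $P:=\{q\ne i:\ \ds_G(u)_q>\ds_G(v)_q\}$ is nonempty; fix $q\in P$. Writing $W:=N_G(u)\setminus(N_G(v)\cup\{v\})$ and $Z:=N_G(v)\setminus(N_G(u)\cup\{u\})$, a routine count (the common-neighbor contributions and the corrections coming from $u$ and $v$ themselves cancel) gives $|W\cap V_\ell|-|Z\cap V_\ell|=\ds_G(u)_\ell-\ds_G(v)_\ell$ for every $\ell$. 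In particular $|W\cap V_q|\ge\ds_G(u)_q-\ds_G(v)_q\ge1$ and $|Z\cap V_i|\ge\ds_G(v)_i-\ds_G(u)_i\ge2$, so we may pick $w\in Z\cap V_i$ and $z\in W\cap V_q$; then $u,v,w,z$ are pairwise distinct, $vw,uz\in E$, $uw,vz\notin E$, and, as $u,v\in V_j$, the swap $vw,uz\Rightarrow vz,uw$ is an RSO. It changes no degree spectrum other than $\ds(v)$ and $\ds(u)$, and those only in coordinates $i$ (by $-1$ for $v$, $+1$ for $u$) and $q$ (by $+1$ for $v$, $-1$ for $u$).

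Finally I would conclude: $C_{G'}(\ell)=C_G(\ell)$ for every $\ell\ne j$ since no spectrum outside $V_j$ changes. For $C_G(j)$, only the coordinate-$i$ and coordinate-$q$ terms move. In coordinate $i$ the gap $\ge2$ prevents crossing and one of $\ds_G(v)_i,\ds_G(u)_i$ is strictly outside $[\lfloor A_j(i)\rfloor,\lceil A_j(i)\rceil]$, so by the first step this term drops by at least $1$; in coordinate $q$ we have $\ds_G(u)_q>\ds_G(v)_q$, so pulling the two values together cannot increase the term, and by the first step it changes by at most $0$. Hence $C_{G'}(j)\le C_G(j)-1$. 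The step I expect to be the main obstacle is exactly controlling this side effect — proving that the compensating coordinate $q$ does not cost more than coordinate $i$ saves; the identity $|W\cap V_\ell|-|Z\cap V_\ell|=\ds_G(u)_\ell-\ds_G(v)_\ell$ is what lets one route the compensating edge through a coordinate where $u$ is already ahead of $v$, and the convexity of $g_q$ then makes that move free.
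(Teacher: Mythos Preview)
Your argument follows the same route as the paper's proof: pick an unbalanced coordinate $i$, take $v$ and $u$ in $V_j$ with maximal and minimal $\ds_G(\cdot)_i$, find a compensating coordinate $q$ with $\ds_G(u)_q>\ds_G(v)_q$, and perform the evident RSO. You are in fact more careful than the paper in two places: you verify that $u,v,w,z$ are pairwise distinct (via the sets $W$ and $Z$ and the identity $|W\cap V_\ell|-|Z\cap V_\ell|=\ds_G(u)_\ell-\ds_G(v)_\ell$), and you make the paper's ``it is easy to see'' explicit through the convex reformulation $c_G(x,\ell)=g_\ell(\ds_G(x)_\ell)$.

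One small overstatement to fix: the general claim in your first step that the change is $\le -1$ whenever the values do not cross and $\ds_G(v)_\ell>\lceil A_j(\ell)\rceil$ (or $\ds_G(u)_\ell<\lfloor A_j(\ell)\rfloor$) is false as stated. Take $A_j(\ell)=A\in\mathbb{Z}$, $\ds_G(u)_\ell=A$, $\ds_G(v)_\ell=A+2$; then the change is $g_\ell(A{+}1)+g_\ell(A{+}1)-g_\ell(A{+}2)-g_\ell(A)=1+1-2-0=0$. What rescues your application to coordinate $i$ is the extra information from the min/max choice: you also have $\ds_G(u)_i\le\lfloor A_j(i)\rfloor$ and $\ds_G(v)_i\ge\lceil A_j(i)\rceil$, and the degenerate case $\ds_G(u)_i=A_j(i)\in\mathbb{Z}$ is impossible (the minimum equalling the average would force all $\ds_G(x)_i$ to equal $A_j(i)$, contradicting $\sum_x c_G(x,i)>0$). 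With those hypotheses added to the lemma, the strict-decrease claim holds and the rest of your proof is correct.
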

\begin{proof} 
Choose $u,v\in V_j$ such that $\ds_G(u)_i$ is minimal and $\ds_G(v)_i$ is maximal amongst all vertices in $V_j$.
Then 
we have $
\ds_G(u)_i\le \lfloor A_j(i) \rfloor\le\lceil A_j(i) \rceil\le \ds_G(v)_i$ with at least two strict inequalities.This implies that 
$\ds_G(u)_i < \lfloor A_j(i) \rfloor<\ds_G(v)_i$ or $\ds_G(u)_i<\lceil A_j(i)\rceil<\ds_G(v)_i$ holds.
Assume $\ds_G(u)_i < \lfloor A_j(i) \rfloor<\ds_G(v)_i$ (the other case is handled similarly).
As $u$ has fewer neighbors in $V_i$ then $v$,  there exists a $w\in V_i$ such that  $vw\in E(G)$ but $uw\not\in E(G)$.

\noindent
Since $d(v)=d(u)=j$ and $\ds_G(v)_i > \ds_G(u)_i$, there exists a $k\ne i$ such that $\ds_G(u)_k> \ds_G(v)_k$. Consequently there exists 
$ z \in V_k$ such that $uz \in E(G)$ while $vz\not\in E(G)$. Thus  
$vw, uz \Rightarrow vz, uw$  is actually an RSO. 
It is easy to see that 
$$
c_G(v,i) + c_G(u,i) -2 \le c_{G'}(v,i) + c_{G'}(u,i) \le c_G(v,i) + c_G(u,i)-1
$$
while
$$
c_G(v,k) + c_G(u,k) -2 \le c_{G'}(v,k) + c_{G'}(u,k) \le c_G(v,k) + c_G(u,k).
$$
This implies  $C_{G'}(j)<C_G(j)$. $C_{G'}(\ell)=C_G(\ell)$ for $\ell\ne j$ follows from the fact that this RSO can change only the degree spectrum of $u$ and $v$. 
\end{proof}

\noindent
This lemma easily implies
\begin{corollary}\label{th:cor}
Let $G$  be a realization of a graphical JDM. There exists a series of RSOs transforming $G$ into a balanced realization $G'$.
\end{corollary}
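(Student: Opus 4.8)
The plan is to iterate Lemma~\ref{th:homo} until no index $j$ has $C_G(j)\ne 0$. Concretely, I would argue as follows. For a realization $G$ of the graphical JDM $\J$, consider the nonnegative integer quantity
\begin{equation}\label{eq:potential}
\Phi(G)=\sum_{j=1}^{\Delta} C_G(j),
\end{equation}
where $C_G(j)$ is the function defined in \eqref{eq:distance}. By definition, $\Phi(G)=0$ if and only if $C_G(j)=0$ for every $j$, which is precisely the statement that every $V_j$ is balanced in $G$, i.e.\ that $G$ is a balanced realization.

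If $\Phi(G)\ne 0$, then there is some index $j$ with $C_G(j)\ne 0$, so Lemma~\ref{th:homo} applies: there is an RSO taking $G$ to a realization $G'$ (of the same JDM, since RSOs preserve the JDM) with $C_{G'}(j)<C_G(j)$ and $C_{G'}(\ell)=C_G(\ell)$ for all $\ell\ne j$. Summing over all indices, $\Phi(G')=\Phi(G)-\bigl(C_G(j)-C_{G'}(j)\bigr)<\Phi(G)$, so a single RSO strictly decreases $\Phi$. Since $\Phi$ takes values in the nonnegative integers, after finitely many RSOs we reach a realization $G'$ with $\Phi(G')=0$, which is balanced. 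This is the desired conclusion. I would also remark that the number of RSOs needed is at most $\Phi(G)$, which is bounded in terms of $\J$ alone.

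I do not expect any genuine obstacle here: the corollary is a routine "monovariant decreases along a well-founded order" argument, and all the work has already been done in Lemma~\ref{th:homo}. The only points requiring a word of care are (i) noting explicitly that an RSO keeps the realization a realization of the \emph{same} JDM (so that Lemma~\ref{th:homo} can be reapplied to $G'$), which was established when RSOs were introduced in Section~\ref{sec:def}; and (ii) observing that $\Phi$ is integer-valued and bounded below, so the process must terminate rather than decrease forever. Neither is more than a sentence.
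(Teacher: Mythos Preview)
Your proof is correct and takes essentially the same approach as the paper: iterate Lemma~\ref{th:homo}, using that each application strictly decreases a nonnegative integer quantity that vanishes exactly when the realization is balanced. The only cosmetic difference is that the paper zeroes out the unbalanced classes one index $j$ at a time (relying on the clause $C_{G'}(\ell)=C_G(\ell)$ for $\ell\ne j$ to ensure earlier work is not undone), whereas you sum the $C_G(j)$ into a single global potential $\Phi$; both arguments are equally valid and equally short.
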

\begin{proof} If $G$ is balanced we are done. Otherwise let $\{j_1,\ldots,j_s\}$ be the set of indices $j$ for which $C_G(j)\ne 0$. We define
a sequence $G=G_0,G_1,\ldots,G_s$ such that for each $1\le i\le s$ there is a sequence of RSOs transforming
$G_{i-1}$ to $G_i$, $C_{G_i}(j_i)=0$ and for $\ell\ne j_i$ we have $C_{G_i}(\ell)=C_{G_{i-1}}(\ell)$. Successive applications of
Lemma~\ref{th:homo} with $j=j_i$ give that an appropriate sequence of RSOs exists for each $i$. It follows that  $G_s$ is balanced.
\end{proof}

\noindent
It remains to show that any two balanced realizations are connected via a sequence of RSOs. 
To this end, we introduce the following definitions.
We call $V_i$ {\em mixed} with respect to $V_j$ if $A_j(i)$ is not an integer. (Note that it is possible for $V_j$ to be mixed with respect to itself.)  
When $V_i$ is mixed with respect to $V_j$,  we call a vertex $v\in V_j$ {\em low for $V_i$} if $\ds_G(v)_i = \lfloor A_j(i) \rfloor,$ and call it  {\em high for $V_i$} if $\ds_G(v)_i = \lfloor A_j(i) \rfloor+1$. 

\noindent
The auxiliary bipartite graph $\cA(G,j)=(U,P;E)$, is given by $U=\{u_v:v\in V_j\}$, $P=\{p_i: V_i$ is mixed with respect to $V_j\}$, and
$E=\{u_vp_i:\,v$ is high for $V_i\}$. 

\noindent
Now we are ready to show 
\begin{lemma}\label{th:swap}
If there is a bipartite swap operation transforming 
$\cA(G,j)=(U,P;E)$ into $\cA'=(U,P;E')$, then there is an RSO transforming $G$ into $G'$ such that $\cA(G',j) = \cA'$, and  $\ds_G(v)=\ds_{G'}(v)$ for each vertex $v\notin V_j$.
\end{lemma}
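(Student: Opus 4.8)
The plan is to translate the given bipartite swap on $\cA(G,j)$ directly into a single RSO on $G$ whose effect on degree spectra is exactly the prescribed one. Write the bipartite swap as $u_ap_i,\,u_bp_{i'}\Rightarrow u_bp_i,\,u_ap_{i'}$, where $a,b\in V_j$ are distinct, $V_i$ and $V_{i'}$ are mixed with respect to $V_j$, and $i\ne i'$ (since $p_i\ne p_{i'}$). Using that we only apply the lemma when $V_j$ is balanced in $G$ — so that every $v\in V_j$ is high or low for each mixed part — the swap tells us that $a$ is high for $V_i$ and low for $V_{i'}$, while $b$ is high for $V_{i'}$ and low for $V_i$; hence $\ds_G(a)_i=\ds_G(b)_i+1=\lfloor A_j(i)\rfloor+1$ and $\ds_G(b)_{i'}=\ds_G(a)_{i'}+1=\lfloor A_j(i')\rfloor+1$. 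I then aim to produce an RSO $aw,\,bz\Rightarrow az,\,bw$ with $w\in V_i$ and $z\in V_{i'}$: such a swap decreases $\ds(a)_i$ by one, raises $\ds(a)_{i'}$ by one, raises $\ds(b)_i$ by one, and lowers $\ds(b)_{i'}$ by one, which is precisely the flip ``$a$ becomes low for $V_i$ and high for $V_{i'}$, $b$ becomes high for $V_i$ and low for $V_{i'}$'', and (because $a,b$ lie in the common class $V_j$) it changes no other degree spectrum whatsoever.

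The heart of the argument — and the step I expect to be the main obstacle — is selecting the vertices $w$ and $z$. Since $\ds_G(a)_i>\ds_G(b)_i$, vertex $a$ has a neighbor in $V_i$ that $b$ lacks, so some $w\in V_i$ has $aw\in E(G)$, $bw\notin E(G)$; symmetrically some $z\in V_{i'}$ has $bz\in E(G)$, $az\notin E(G)$. When $i\ne j$ we automatically get $w\notin\{a,b\}$ because $a,b\notin V_i$, and likewise for $z$ when $i'\ne j$. The delicate case is $i=j$ (and symmetrically $i'=j$), where $a,b$ themselves lie in $V_i$ and may be adjacent. Here I would run a short counting argument: with $S_a=N_G(a)\cap V_j$, $S_b=N_G(b)\cap V_j$ we have $|S_a|=|S_b|+1$, $a\notin S_a$, $b\notin S_b$, and $b\in S_a$ (equivalently $a\in S_b$) exactly when $ab\in E(G)$. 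A case split on whether $ab\in E(G)$ then gives $S_a\setminus(S_b\cup\{b\})\ne\emptyset$: if $ab\notin E(G)$ then $S_a\setminus(S_b\cup\{b\})=S_a\setminus S_b$ has size at least $|S_a|-|S_b|=1$; if $ab\in E(G)$ then $|S_a\setminus\{b\}|=|S_b|$ while $(S_a\setminus\{b\})\cap S_b\subseteq S_b\setminus\{a\}$ has size at most $|S_b|-1$, again leaving something left over. Any element of $S_a\setminus(S_b\cup\{b\})$ serves as $w$, and the mirror argument (roles of $a,b$ and $i,i'$ exchanged) yields $z$. Since $i\ne i'$ the parts $V_i,V_{i'}$ are disjoint, so $w\ne z$ and $\{a,b,w,z\}$ consists of four distinct vertices; together with $aw,bz\in E(G)$, $az,bw\notin E(G)$ and $a,b\in V_j$, the operation $aw,\,bz\Rightarrow az,\,bw$ is a legitimate RSO. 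Let $G'$ be the resulting graph.

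Finally I would check the two asserted properties of $G'$. For any $v\notin V_j$ we have $\ds_{G'}(v)=\ds_G(v)$, since an RSO whose two ``$V_j$-endpoints'' $a,b$ lie in $V_j$ alters only the degree spectra of $a$ and $b$ (as noted in Section~\ref{sec:def}). For the auxiliary graph: the part set $P$ depends only on $\J$ through the mixedness of the $V_\ell$, so it is identical for $G$ and $G'$; the spectra of all $v\in V_j\setminus\{a,b\}$ are unchanged, and for $a$ and $b$ only the $i$- and $i'$-coordinates move, each by one, flipping exactly the high/low status for $V_i$ and for $V_{i'}$ as computed above while leaving the status for every other mixed part intact. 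Therefore $\cA(G',j)$ is obtained from $\cA(G,j)$ by deleting $u_ap_i,u_bp_{i'}$ and inserting $u_ap_{i'},u_bp_i$, i.e. $\cA(G',j)=\cA'$, completing the proof.
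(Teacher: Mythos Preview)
Your proof is correct and follows essentially the same approach as the paper: translate the bipartite swap $u_ap_i,\,u_bp_{i'}\Rightarrow u_bp_i,\,u_ap_{i'}$ directly into the RSO $aw,\,bz\Rightarrow az,\,bw$ with $w\in V_i$, $z\in V_{i'}$ chosen via the high/low imbalance, and then read off the effect on degree spectra. You are in fact more careful than the paper on two points it glosses over: the implicit need for $V_j$ to be balanced (so that ``not high'' genuinely means ``low''), and the verification that $w\ne b$ and $z\ne a$ in the degenerate cases $i=j$ or $i'=j$, which your counting argument handles cleanly.
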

\begin{proof}
Let $u_vp_i,u_wp_k\Rightarrow u_vp_k,u_wp_i$ be a bipartite swap transforming $\cA(G,j)$ into $\cA'$. Then $v,w\in V_j$, and in the graph $G$ the vertex
$v$ is high for $V_i$, $w$ is high for $V_k$,  $v$ is low for $V_k$ and $w$ is low for $V_i$.
Therefore there exists $x\in V_i$ and $y\in V_k$ such that 
$vx \in E(G)$, $wx \not \in E(G)$, $vy \notin E(G)$ and $wy\in E(G)$. It's easy to see that 
$vx,wy \Rightarrow  vy, wx$ is an RSO transforming $G$ into a $G'$ that has the desired properties.  
\end{proof}
\noindent
This easily implies  

\begin{theorem}\label{th:homo2}
If $G$ and $H$ are two balanced realizations of the same JDM, then there is a series of RSOs transforming $G$ into 
$G'$, such that $\ds_{G'}(v) =\ds_{H} (v)$ for each $v\in V$.  
\end{theorem}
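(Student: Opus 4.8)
The plan is to reduce the problem, via Lemma~\ref{th:swap}, to a statement purely about the auxiliary bipartite graphs $\cA(G,j)$. Fix an index $j$. Since $G$ and $H$ are realizations of the same JDM, the numbers $A_j(i)$ depend only on $\J$, so the vertex sets $U$ and $P$ of $\cA(G,j)$ and $\cA(H,j)$ coincide, and moreover the degrees in these bipartite graphs are forced: the $P$-side degree of $p_i$ is $\sum_{v\in V_j}\ds_G(v)_i$ minus $|V_j|\lfloor A_j(i)\rfloor$, which is the same for $G$ and $H$; and the $U$-side degree of $u_v$ is the number of indices $i$ for which $v$ is high, which is \emph{not} a priori the same for $G$ and $H$. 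So I would first argue that among balanced realizations the $U$-degrees are also determined --- e.g. by a double-counting / Erd\H{o}s--Gallai-type consistency argument --- so that $\cA(G,j)$ and $\cA(H,j)$ have \emph{identical} bipartite degree sequences on both sides. (If the $U$-degrees turn out not to be forced, the fallback is to first use RSOs within $V_j$ to equalize them before invoking Ryser; but I expect they are forced.)

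Once $\cA(G,j)$ and $\cA(H,j)$ realize the same bipartite degree sequence on the common vertex set $(U,P)$, Ryser's theorem (\cite{ryser}, already cited in the paper) says they are connected by a sequence of bipartite swaps. Lemma~\ref{th:swap} then lifts each such bipartite swap to an RSO on $G$ that does not touch the degree spectrum of any vertex outside $V_j$. Applying this chain of RSOs transforms $G$ into a realization $G^{(j)}$ with $\cA(G^{(j)},j)=\cA(H,j)$; since $\cA(\cdot,j)$ records exactly which vertices of $V_j$ are high versus low for each mixed $V_i$, and in a balanced realization the non-mixed coordinates of $\ds_G(v)_i$ are already equal to the integer $A_j(i)$, having equal auxiliary graphs means $\ds_{G^{(j)}}(v)=\ds_H(v)$ for every $v\in V_j$. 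Crucially, an RSO with both endpoints in $V_j$ changes only the degree spectra of those two endpoints, so $G^{(j)}$ is still balanced and $\ds_{G^{(j)}}(v)=\ds_G(v)$ for all $v\notin V_j$; in particular the agreement with $H$ on degree spectra, once achieved on $V_j$, is preserved by later stages.

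I would then iterate over $j=1,2,\ldots,\Delta$: starting from $G^{(0)}:=G$, stage $j$ turns $G^{(j-1)}$ into $G^{(j)}$ with $\ds_{G^{(j)}}(v)=\ds_H(v)$ for all $v\in V_1\cup\cdots\cup V_j$, using only RSOs internal to $V_j$, which leave the already-fixed spectra on $V_1,\ldots,V_{j-1}$ untouched. After the last stage, $G':=G^{(\Delta)}$ satisfies $\ds_{G'}(v)=\ds_H(v)$ for every $v\in V$, which is the assertion of the theorem.

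The main obstacle I anticipate is the first step: verifying that the $U$-side degree sequence of $\cA(G,j)$ --- the number of mixed $V_i$ for which a given $v\in V_j$ is high --- is the same for all balanced realizations of $\J$. This is a counting identity that should follow from the fact that $\sum_i \ds_G(v)_i = d(v) = j$ is fixed while the non-mixed coordinates and the floor parts of the mixed coordinates are all fixed, forcing the number of "rounded-up" coordinates to be constant; but making this rigorous (and handling the $i=j$ coordinate, where the relevant average is $2\J(j,j)/|V_j|$ and a vertex contributes to two half-edges) is the delicate part. Everything after that is a clean application of Ryser plus Lemma~\ref{th:swap} plus the locality of RSOs.
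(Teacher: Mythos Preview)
Your approach is essentially identical to the paper's proof, which also iterates over the degree classes, invokes Ryser's theorem on $\cA(G_{j-1},j)$ versus $\cA(H,j)$, and lifts each bipartite swap to an RSO via Lemma~\ref{th:swap}; the paper simply asserts without elaboration that the two auxiliary graphs ``have the same degree sequences.'' Your anticipated obstacle is not one, and your own sketch already dispatches it: for $v\in V_j$ the degree of $u_v$ in $\cA(G,j)$ equals $\sum_i \ds_G(v)_i-\sum_i\lfloor A_j(i)\rfloor=j-\sum_i\lfloor A_j(i)\rfloor$, a quantity depending only on $\J$ (and the $i=j$ term needs no special handling, since $\ds_G(v)_j$ is just one summand in $\sum_i\ds_G(v)_i=d(v)=j$).
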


\begin{proof}
We will define a sequence of graphs $G_0=G,G_1,\ldots,G_{\Delta}$ such that for 
$1\le i\le \Delta$ we have a sequence
of RSOs that transforms $G_{i-1}$ to $G_i$, with the properties that
$\ds_{G_i}(v)=\ds_H(v)$ for each $v\in V_i$ and $\ds_{G_i}(v)=\ds_{G_{i-1}}(v)$ for each $v\notin V_i$.
With $G_{i-1}$ is already defined, consider the bipartite graphs $\cA(G_{i-1},i)$ and $\cA(H,i)$. They have the same degree sequences.
Thus Ryser's  Theorem (\cite{ryser}) gives a sequence of bipartite swaps transforming one into the other. Repeated applications of
Lemma~\ref{th:swap} implies the existence
of RSOs transforming $G_{i-1}$ into a $G_i$ with the required properties.
(In fact, one can use Theorem 3.5 from \cite{distance} to determine the minimum sequence length necessary for the task.) 
The statement then follows by choosing $G'=G_{\Delta}$.
\end{proof}

\noindent
We are ready now to prove the main result.
\begin{theorem}\label{th:conn}
The space of all realizations of any given JDM is connected via RSOs.
\end{theorem}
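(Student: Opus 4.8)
The plan is to combine the three results already proved into a single chain of RSOs. Given two arbitrary realizations $G$ and $H$ of the JDM $\J$, first apply Corollary~\ref{th:cor} to each of them: there is a sequence of RSOs transforming $G$ into a balanced realization $G^{\flat}$, and similarly one transforming $H$ into a balanced realization $H^{\flat}$. Since RSOs are reversible (the inverse of $ac,bd\Rightarrow bc,ad$ is $bc,ad\Rightarrow ac,bd$, which is again an RSO with the same pair $a,b\in V_i$), it suffices to connect $G^{\flat}$ to $H^{\flat}$ via RSOs.

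Next, apply Theorem~\ref{th:homo2} to the two balanced realizations $G^{\flat}$ and $H^{\flat}$: there is a sequence of RSOs transforming $G^{\flat}$ into a realization $G'$ with $\ds_{G'}(v)=\ds_{H^{\flat}}(v)$ for every vertex $v\in V$. So now we are reduced to the case of two realizations $G'$ and $H^{\flat}$ that have the \emph{same degree spectrum at every vertex}, and we must connect these by RSOs. This is the one piece not yet established in the excerpt, so I expect it to be the main obstacle.

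To handle that final reduction, I would argue degree-class by degree-class, or more precisely I would look at the bipartite "neighborhood" structure between a fixed pair $V_i, V_j$. Fix indices $i\le j$. The edges between $V_i$ and $V_j$ (or, when $i=j$, within $V_i$) form a bipartite graph (resp.\ a graph) in which every vertex of $V_j$ has a prescribed degree $\ds(v)_i$ and every vertex of $V_i$ has a prescribed degree $\ds(v)_j$ — and these prescribed values are the same for $G'$ and $H^{\flat}$ because the degree spectra agree. By Ryser's theorem (for the $i\ne j$ bipartite case) and Petersen's theorem (for the $i=j$ case), the two edge-sets on this pair of classes can be transformed into one another by bipartite swaps / ordinary swaps that stay within $V_i\cup V_j$; each such swap is of the form $ac,bd\Rightarrow bc,ad$ with $a,b$ in the same class, hence is an RSO that moreover does not touch any edge with an endpoint outside $V_i\cup V_j$, so it leaves the degree spectrum (indeed the entire neighborhood in every other class) of all other vertices unchanged. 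Performing this for one pair $(i,j)$ at a time, in some fixed order, never disturbs the pairs already aligned, so after finitely many steps $G'$ is transformed into $H^{\flat}$. Concatenating the three sequences — $G\to G^{\flat}$, $G^{\flat}\to G'$, $G'\to H^{\flat}$, then the reverse of $H\to H^{\flat}$ — gives the desired sequence of RSOs from $G$ to $H$, proving connectivity. The subtle point to get right is that the swaps realizing Ryser/Petersen on the $(i,j)$-subgraph genuinely qualify as RSOs and genuinely commute with the earlier alignments; once that bookkeeping is in place the theorem follows.
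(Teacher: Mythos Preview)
Your proposal is correct and follows essentially the same route as the paper: balance both realizations via Corollary~\ref{th:cor}, align all degree spectra via Theorem~\ref{th:homo2}, then for each pair $(i,j)$ use Ryser's theorem (bipartite case) or the ordinary swap connectivity result (case $i=j$) on the subgraph spanned by $V_i\cup V_j$, observing that such swaps are RSOs and do not disturb the other class-pairs. The paper's proof is exactly this, invoking reversibility of RSOs at the end just as you do.
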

\begin{proof}
Let $G$ and $H$ be two realizations of the same JDM. 
Corollary \ref{th:cor} will transform our realizations via RSOs into balanced realizations $G'$ and $H'$. Applying Theorem \ref{th:homo2}  transforms $G'$ via RSOs into a balanced realization $G''$ such that $\ds_{G''}(v) =\ds_{H'} (v)$ for each vertex $v$. 
For $i\ne j$ let $G_{ij}$ and $H_{ij}$ be the bipartite graphs spanned by vertex sets $V_i$ and $V_j$ in $G''$ and $H'$ respectively,
and let $G_{ii}$ and $H_{ii}$ be the corresponding graphs spanned by $V_i$. Notice that 
ordinary and bipartite swap operations in $G_{ij}$ (when $i=j$ and $i\ne j$) are in fact RSOs in 
$G''$, and the degree sequences of $G_{ij}$ and $H_{ij}$ are the same.
A straightforward application of the corresponding Havel-Hakimi algorithm (\cite{havel} and \cite{hakimi} or \cite{pet}) and Ryser's theorem (\cite{ryser}) gives us a sequence of
RSOs transforming $G''$ to $H$.

Since the inverse of any RSO is also an RSO, the proof is complete.
\end{proof}

\section{Characterization of Graphical JDMs}\label{sec:EG}
\noindent The following characterization for a square matrix $M$ with integer entries
to be a graphical JDM was proved by Amanatidis, Green and Mihail in the still unpublished paper \cite{agm08}. In the lecture of Schmitt (\cite{schmitt}) one can find a sketch of that proof. Later Stanton and Pinar gave another constructive proof.  Here we provide a
more transparent and direct approach to the construction. As it provides simple necessary and sufficient conditions for a matrix to be realized as a graphical JDM, we call the result  an {\em Erd\H{o}s-Gallai type}   theorem  (see \cite{EG}).
\begin{theorem}[Erd\H{o}s-Gallai type theorem for JDM]\label{th:EG-JDM}
A $k \times k$ matrix $\J$ is a graphical JDM if and only if the following hold. 
\begin{enumerate}[{\rm (i)}]
\item for all $i:\; $ $n_i:=\frac{1}{i} \left (\J_{ii}+\sum\limits_{j=1}^{k} \J_{ij} \right )$ \ \ is an integer;
\item for all $i:\; $ $\J_{ii}\le {n_i\choose 2}$;
\item for all $i\ne j:\; $ $\J_{ij}\le n_i n_j$.
\end{enumerate}
\end{theorem}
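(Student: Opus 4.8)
The plan is to establish the easy direction first and then give an explicit construction for the converse. For necessity, suppose $\J = \J(G)$ for some realization $G$. Condition~(i) is exactly Remark~\ref{th:size}, since $|V_i|$ must be a nonnegative integer. Condition~(ii) holds because the edges counted by $\J_{ii}$ all lie inside $V_i$, which spans at most $\binom{n_i}{2}$ edges. Condition~(iii) holds because the edges counted by $\J_{ij}$ form a bipartite graph between $V_i$ and $V_j$, which has at most $n_i n_j$ edges. This paragraph is routine and I would keep it to a few lines.

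For sufficiency, assume (i)--(iii) hold. First I would dispose of trivialities: if some $n_i = 0$ but $\J$ has a nonzero entry in row/column $i$, conditions (ii)--(iii) force those entries to be $0$, so we may discard such indices; similarly we may assume every $n_i \ge 1$. Now I want to build a graph $G$ on a vertex set $V = V_1 \cup \dots \cup V_k$ with $|V_i| = n_i$, placing $\J_{ii}$ edges inside $V_i$ and $\J_{ij}$ edges between $V_i$ and $V_j$ for $i \ne j$, \emph{such that the resulting degree of each vertex in $V_i$ is exactly $i$}. The key idea is to handle the blocks as independently as possible and to distribute edges within each block as evenly as possible. For the off-diagonal block $(i,j)$, condition~(iii) says $\J_{ij} \le n_i n_j$, so a bipartite graph with $n_i + n_j$ vertices and $\J_{ij}$ edges exists; I would choose the one in which the $\J_{ij}$ edges are spread as uniformly as possible over both sides, so that each vertex of $V_i$ receives either $\lfloor \J_{ij}/n_i \rfloor$ or $\lceil \J_{ij}/n_i \rceil$ of them (a "balanced" bipartite realization in the sense used in Section~\ref{sec:homo}). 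For the diagonal block, condition~(ii) gives $\J_{ii} \le \binom{n_i}{2}$, so a simple graph on $n_i$ vertices with $\J_{ii}$ edges exists, and again I would take one that is as regular as possible, so each vertex of $V_i$ has diagonal-degree $\lfloor 2\J_{ii}/n_i \rfloor$ or $\lceil 2\J_{ii}/n_i \rceil$.

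The main obstacle is that summing these "as-even-as-possible" contributions over all $j$ need not give every vertex of $V_i$ exactly degree $i$ on the nose — the floors and ceilings can accumulate. The fix is the arithmetic identity coming from~(i): $i \cdot n_i = \J_{ii} + \sum_{\ell} \J_{i\ell} = \sum_\ell (\text{total degree contributed by block } (i,\ell))$, so the \emph{average} over $V_i$ of the total degree is exactly $i$. Hence if in block $(i,\ell)$ some vertices are "high" (get the ceiling) and others "low" (get the floor), a counting argument shows we can re-pair the edges within that block — this is precisely a restricted swap / a bipartite swap inside the block — to shuffle which vertices of $V_i$ are high or low, \emph{without} changing $\J$, until every vertex of $V_i$ attains total degree exactly $i$. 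Concretely, I would order the vertices of $V_i$ as $v_1, \dots, v_{n_i}$ and, processing the blocks $(i,\ell)$ one at a time, assign the ceilings to a prefix of this list whose length is the remainder $\J_{i\ell} \bmod n_i$ (for off-diagonal) and argue that after all blocks are processed the "surpluses" telescope so that each $v_t$ has degree exactly $i$; this uses~(i) in the form that the remainders sum to a multiple of $n_i$. One technical point to check is that the diagonal block of $V_i$ and the off-diagonal blocks touching $V_i$ interact only through these degree bookkeeping constraints and can be chosen independently, so there is no compatibility issue between blocks. Finally I would remark that the whole construction is algorithmic and runs in polynomial time, which is the "more general construction algorithm" promised in the introduction.
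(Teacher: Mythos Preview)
Your approach is genuinely different from the paper's and the overall strategy (build each block nearly regularly, then arrange which vertices are ``high'' so the degrees land on $i$) can be made to work, but as written there are two real gaps.

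First, the sentence ``re-pair the edges within that block --- this is precisely a restricted swap / a bipartite swap inside the block --- to shuffle which vertices of $V_i$ are high or low'' is wrong: swaps preserve \emph{every} degree, so no swap inside a single block can change which vertices carry the ceiling. What actually moves a ``high'' from $y$ to $x$ is the one-edge move $yz\mapsto xz$ (delete $yz$, add $xz$), which is not a swap at all. Alternatively you can simply \emph{choose} each block's nearly-regular realization with the high labels wherever you want them; but then for an off-diagonal block $(i,\ell)$ you must verify that the high set on the $V_i$ side and the high set on the $V_\ell$ side can be prescribed \emph{independently} (a short Gale--Ryser check, but one you have not done), and for the diagonal block you must verify that the almost-regular degree sequence on $n_i$ vertices with $\J_{ii}$ edges is graphical. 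Second, the ``prefix'' rule does not telescope: if every block's ceilings go to $\{v_1,\dots,v_{r_\ell}\}$, then $v_1$ is high in every block with $r_\ell\ge 1$, which is typically far from the required count. The rule that works is cyclic intervals (start block $\ell$'s highs where block $\ell{-}1$'s ended, indices taken mod $n_i$); since $\sum_\ell r_\ell = n_i\bigl(i-\sum_\ell\lfloor A_i(\ell)\rfloor\bigr)$, the intervals wrap around exactly the right number of times and every vertex ends up high in the same number of blocks.

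The paper sidesteps all of this bookkeeping with a short extremal argument. It lets $\mathcal G$ be the set of \emph{all} graphs on $W_1\cup\cdots\cup W_k$ having $\J_{ij}$ edges in each block (no balancedness required; (ii) and (iii) make $\mathcal G$ nonempty), sets $\psi(G')=\sum_i\sum_{v\in W_i}|d_{G'}(v)-i|$, and takes $G$ minimizing $\psi$. If $\psi(G)>0$, then some $W_i$ contains $x$ with $d_G(x)<i$ and $y$ with $d_G(y)>i$ (because the degrees in $W_i$ sum to $i|W_i|$ by (i)); any $z$ with $yz\in E$, $xz\notin E$ yields $G^*=(G\setminus\{yz\})\cup\{xz\}\in\mathcal G$ with $\psi(G^*)<\psi(G)$, a contradiction. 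Your balanced-block construction would give a more explicit realization once repaired, but the paper's potential-function proof is both shorter and avoids the compatibility checks between blocks.
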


\begin{proof}
The necessity of the properties is trivial, so it remains to show that they are sufficient.

\medskip\noindent
Assume that $\J$ satisfies the required properties. We need to construct a graph $G$ with $\J(G)=\J$.

\medskip\noindent We fix a partition $V$ into $k$ vertex sets $W_1,\ldots,W_k$ with
$|W_i|=n_i$. For any graph $G=(V,E)$ we will use the notation $G_{ij}$ to denote the graph
on vertex set $W_i\cup W_j$ with edge set $E_{ij}=\{xy\in E: x\in W_i,y\in W_j\}$. Clearly,
$G_{ij}=G_{ji}$ and for $i\ne j$ the graph $G_{ij}$ is bipartite.
Moreover, for $\{i,j\}\ne\{i',j'\}$ the graphs $G_{ij}$ and $G_{i'j'}$ are edge-disjoint.

\medskip\noindent
We set ${\mathcal G}$ be the set of all graphs $G'$ where for each
$1\le i\le j\le k$ the graph $G'_{ij}$ has $\J_{ij}$ edges.
The conditions on $\J$ ensure that ${\mathcal G}$ is nonempty.  However, if $G'\in{\cal G}$,
then $W_i$ may not be the set of vertices of degree $i$ in $G'$, thus $G'$ is not necessarily a realization of $\J$.
On the other hand if $G'\in{\mathcal G}$ has the property that for all $i$ the set of degree $i$ vertices is $W_i$, then
$\J(G')=\J$.

\medskip\noindent
For a $G'\in{\cal G}$ let
$$
\psi(G'):=\sum_{i=1}^k\sum_{v\in W_i} |d_{G'}(v)-i|,
$$
and let $G$ be a graph minimizing $\psi$ in ${\cal G}$. Clearly, $\psi(G)\ge 0$, and
if $\psi(G)=0$, then
$W_i$ is the set of vertices of degree $i$ in $G$, and consequently $\J(G)=\J$. 

\medskip\noindent
Assume
to the contrary that $\psi(G)>0$. This means that
we have an $i$ such that $W_i$ is not the set of vertices of degree $i$ in $G$.
Since the sum of the degrees of the vertices in $W_i$ is $i|W_i|$ in $G$,
there are $x,y\in W_i$ with $d_{G}(x) < i$ and $d_{G}(y)>i$.
Thus there is a $j$ (not necessarily different from $i$) and a $z\in W_j$ s.t. $yz\in E(G_{ij})$ and $xz\not\in E(G_{ij})$. 
Let $G^*=(V,E^*)$ where $E^*=(E(G)\setminus\{yz\})\cup\{xz\}$. It is easy to see
that $G^*\in{\cal G}$ with $\psi(G^*)<\psi(G)$, a contradiction. 
\end{proof}

\noindent
We note that the proof easily translates to an algorithm to create a  realization of $\J$. In fact, as noted above, {\em every} realization of $\J$ can be generated in this manner.

\section{Some observations on the corresponding Markov chains}\label{sec:MCMC}
\medskip\noindent
The paper \cite{stanton} presents a configuration model for generating realizations of a JDM, and discusses 
Markov chains that act on this configuration model. We give a short description of the model and the Markov chains, and discuss some issues concerning the mixing times of these Markov chains. 

\subsection*{The configuration model and the Markov chains}
\noindent
A graphical JDM $\J$ determines $|V_k|$ for every $k$ in every realization by (\ref{eq:size}).
For each $k$, and each vertex $v\in V_k$, create a cloud of $k$ mini-vertices corresponding to $v$. For each edge $e$ arising from $\J_{ij}$, create two vertices labeled by $e$. One of class $i$, and one of class $j$. We connect all mini-vertices arising from $V_k$ to all edge vertices of class $k$. The result is a complete bipartite graph $K_{k|V_k|, k|V_k|}$ for each $k$.  The collection of these graphs is  called the generalized configuration model. A realization of $\J$ can be found as follows. Take a perfect matching on each bipartite graph (this is called a {\em configuration}). Define a (possibly multigraph) realization by having $v$ adjacent to $w$ if a vertex in the cloud of $v$ and a vertex in the cloud of $w$ are adjacent to the two vertices labeled by the same edge $e$.

\medskip\noindent
It is easy to see that this will generate a (perhaps multigraphical) realization of $\J$, the resulting
graph may have loops and multiple edges. \cite{stanton} proposes the following Markov chain.

\medskip\noindent
Given a starting configuration, with probability 1/2, do nothing. Otherwise, choose a random edge $v_1e_1$ of the configuration, and another random edge $v_2e_2$ in the same bipartite component as the first edge. Perform the swap $v_1e_1, v_2e_2\Rightarrow v_1e_2, v_2e_1$. 

\medskip\noindent
It is clear that this process will output another configuration, and hence a multigraphical
realization of $\J$. This Markov chain, which \cite{stanton} calls chain $\cA$, generates 
multigraphs. A secondary Markov chain, which they refer to as chain $\cB$, begins with a configuration that corresponds to a simple realization, follows the same procedure, but rejects the swap if the corresponding realization is not a simple graph. 

\subsection*{Sampling from chain $\cA$}
\noindent
Stanton and Pinar claim that chain $\cA$ allows for uniform sampling of the configuration model, and hence of multigraphs. They correctly concluded that this chain is rapidly mixing on the space of configurations, and hence can be used for finding a  random configuration nearly uniformly. 
We have two points to make here.  

\medskip\noindent
First, uniform sampling of the configuration model can be achieved in a much more straightforward manner than a Markov chain. One simply needs to provide random permutations to describe the matchings for each complete bipartite graph. Generating uniformly random permutations is both simpler and much faster than implementing the proposed Markov chain.

\medskip\noindent
Second, and more importantly, uniform sampling from the space of configurations does {\em not} yield uniform sampling from the space of all multigraphs. To see this, consider the JDM $\J = \left[\begin{smallmatrix} 0& 0\\ 0& 3 \end{smallmatrix}\right]$. It is easy to verify that this is the JDM for precisely 5 multigraphs on 3 labeled vertices. The first is a $C_3$, the second is three vertices with one loop each, and the other 3 graphs each consist of one loop and a double edge. In the configuration model, there is only one bipartite graph, consisting of 6 mini-vertices and 6 edge-vertices. Hence there are $6! =720$ different configurations possible. A computation reveals that 384 correspond to $C_3$, 48 correspond to the graph with three loops, and 96 correspond to each of the loop + double-edge graphs. In this case, a uniform distribution on configurations yields a distribution that favors the $C_3$ over the three-loop graph by a factor of 8, certainly far from uniform sampling. 

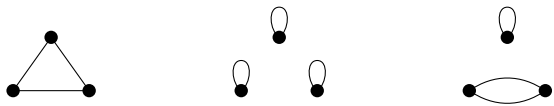
\begin{figure}[h!]
\begin{center}
\begin{tikzpicture}[line cap=round,line join=round,>=triangle 45]

\fill [color=black] (0,0) circle (2.5pt);
\fill [color=black] (1/2, .70711 ) circle (2.5pt);
\fill [color=black] (1,0) circle (2.5pt);
\draw (0,0)--(1/2,.70711)--(1,0)--(0,0);

\fill [color=black] (0,0)+(3,0) circle (2.5pt);
\draw (3,0) to [out=145, in=180] (3,.4) to[out=0, in=35] (3,0);
\fill [color=black] (1/2, .70711 )+(3,0) circle (2.5pt);
\draw (3.5, .70711 ) to [out=145, in=180] (3+1/2, .70711 +.4) to[out=0, in=35] (3+1/2, .70711 );
\fill [color=black] (1,0) +(3,0)circle (2.5pt);
\draw (4,0) to [out=145, in=180] (4,.4) to[out=0, in=35] (4,0);

\fill [color=black] (0,0) +(6,0) circle (2.5pt);
\draw (6,0) to [out=-35, in=215] (7,0);
\fill [color=black] (1/2, .70711 ) +(6,0)circle (2.5pt);
\draw (6.5, .70711 ) to [out=145, in=180] (6+1/2, .70711 +.4) to[out=0, in=35] (6+1/2, .70711 );
\fill [color=black] (1,0)+(6,0) circle (2.5pt);
\draw (6,0) to [out=35, in=180-35] (7,0);
\end{tikzpicture}
\end{center}
\newsavebox{\smlmat}
\savebox{\smlmat}{$\left[\begin{smallmatrix}0&0\\0&3\end{smallmatrix}\right]$}
\caption{The three non-isomorphic multigraphs realizing the JDM $\mathcal{J} =$~\usebox{\smlmat}.The first two have unique labelings, while the third has three distinct labelings.}
\end{figure}

\subsection*{Sampling from chain $\cB$}
\noindent
Stanton and Pinar noted (and we agree)  that bounding the mixing time of the Markov chain $\cB$ seems to be a very difficult problem without some new ideas or techniques. They conducted a series of experiments, running chain $\cB$ multiple times on 11 sample graphs of varying sizes, and measured the {\em autocorrelation time} of the chain, a measurement that they say can be substituted for the mixing time. 

\medskip\noindent
Indeed, for a particular sampling from a fixed JDM, autocorrelation may be an excellent metric for determining how many steps to take between samples, and a way to check that samples  are close to uniformly chosen. Also, the autocorrelation experiments do provide some evidence for rapid mixing. We take issue with their claim that this evidence shows that the Markov chain is fast-mixing in practice. 

\medskip\noindent
Our view is the following: autocorrelation is an excellent tool to show that a Markov chain is {\em not} fast mixing. However, it is not powerful enough to show the opposite. It can show that one particular run of the Markov chain is good, but it cannot predict that all outcomes will be good as well. Even if an experiment is repeated many times, it may give some confidence that the chain is fast-mixing for one the particular JDM, but this may not relate to how it may act on another.

\section{Further Directions}\label{sec:further} 
\medskip\noindent
Because the space of realizations of a joint degree matrix is connected, one could use the Markov chain $\cB$ above to pick a random realization. Proving the rapid mixing of this chain, though seemingly an intractable problem at the present, would be a clear step forward. 

\medskip \noindent
Another option would be to {\em generate} a random realization. In \cite{StarConst}, the authors develop a constrained version of the Havel-Hakimi algorithm for realizing a degree sequence. This constrained version is able to directly generate every possible realization. Furthermore, in \cite{StarConstSampling}, the authors determine a way to provide a weight corresponding to each degree sequence realization, which can be used to make the sampling uniform. It would be of both theoretical and practical interest to do the same for joint degree matrices. 

\medskip \noindent
Finally, it is interesting to note that for degree sequences, there are at least two distinct descriptions of when a sequence is graphical: One by Erd\H{o}s-Gallai \cite{EG}, and the other by Havel \cite{havel}. The characterization by Havel lends itself to a simple algorithmic implementation for building realizations, much moreso than that of Erd\H{o}s and Gallai.  Perhaps there is also a second description of the matrices that can be realized as the joint degree matrix of a graph, one which can be used directly construct many different graphical realizations. 

\bibliographystyle{plain}

\end{document}